\newtheorem{theorem}{Theorem}
\newtheorem{proposition}[theorem]{Proposition}
\newenvironment{proof}{\begin{trivlist}
    \item[\hskip\labelsep{\bf Proof.}]}{$\hfill\Box$\end{trivlist}}
\theoremstyle{plain} \theorembodyfont{\rmfamily}
\newtheorem{remark}[theorem]{Remark}}
\theoremstyle{plain} \theorembodyfont{\rmfamily}
\newcommand{\bsgamma}{{\boldsymbol{\gamma}}}
\newcommand{\bst}{{\boldsymbol{t}}}
\newcommand{\bsx}{{\boldsymbol{x}}}
\newcommand{\bspitch}{{\boldsymbol{\,\pitchfork}}}
\newcommand{\bszero}{{\boldsymbol{0}}}
\newcommand{\bsone}{{\boldsymbol{1}}}
\newcommand{\rd}{\,\mathrm{d}}
\newcommand{\mask}[1]{}
\newcommand{\setu}{{\mathfrak{u}}}
\newcommand{\setv}{{\mathfrak{v}}}
\newcommand{\setw}{{\mathfrak{w}}}
\newcommand{\setU}{{\mathfrak{U}}}
\newcommand{\norm}[1]{\left\Vert#1\right\Vert}
\newcommand{\abs}[1]{\left\vert#1\right\vert}
\newcommand{\diam}{{\rm diam}}
\title{On Equivalence of Anchored and ANOVA Spaces;\\
      Lower Bounds}
\author{Peter Kritzer\thanks{P. Kritzer is supported by the Austrian
Science Fund (FWF):
Project F5506-N26, which is a part of the Special Research Program
"Quasi-Monte Carlo Methods:
Theory and Applications".} ,
Friedrich Pillichshammer\thanks{F. Pillichshammer is
partially supported by the Austrian Science Fund (FWF): Project F5509-N26,
which is a part of the Special Research Program "Quasi-Monte Carlo Methods:
Theory and Applications".}\;, and G. W. Wasilkowski}
\date{\today}
\begin{document}
\maketitle
\centerline{\it Dedicated to the memory of Joseph F. Traub (1932-2015)}

\begin{abstract}
We provide lower bounds for the norms of embeddings between
$\bsgamma$-weighted Anchored and ANOVA spaces of $s$-variate functions
with mixed partial derivatives of order one bounded in $L_p$ norm
($p\in[1,\infty]$). In particular we show that the norms
behave polynomially in $s$ for {\em Finite Order Weights} and
{\em Finite Diameter Weights} if $p>1$, and
increase faster than any polynomial in $s$ for
{\em Product Order-Dependent Weights} and any $p$.
\end{abstract}

\centerline{\begin{minipage}[hc]{130mm}{
      {\em Keywords:} Embeddings, Weighted function spaces, Anchored
      decomposition, ANOVA decomposition, Equivalence of norms  \\
{\em MSC 2000:}  65D30, 65Y20, 41A55, 41A63}
\end{minipage}}

\section{Introduction}
In this short note, we continue research on the equivalence of
$\bsgamma$-weighted Anchored and ANOVA spaces of $s$-variate
functions with mixed partial derivatives of order one bounded in
$L_p$ norm ($p\in[1,\infty]$). The ANOVA spaces have been investigated
in a number of papers and many of their interesting properties
have been found. This includes small truncation and superposition
dimensions, see, e.g., \cite{Owen14} and papers cited there.
However, in general, these results cannot be utilized in practice since
ANOVA decomposition involves variances that are
impossible to compute numerically. On the other hand, small truncation or
superposition dimension for Anchored spaces is easy to exploit,
see, e.g., \cite{KrPiWa15,KSWW10b,Was14}. This is why it is
important to know for which weights $\bsgamma$ the Anchored and
ANOVA spaces are equivalent, or more precisely how fast the norms
of the corresponding embeddings increase when the number $s$ of
variables increases. If the norms are uniformly bounded then we say
that {\em the spaces are uniformly equivalent}. If the norms increase like
a polynomial in $s$ then we say that {\em the spaces are polynomially
equivalent.}

This question of equivalence was first addressed in \cite{HeRi13} for product
weights and in the Hilbert space setting ($p=2$). More precisely the
authors provided lower and upper bounds on the norms of the
corresponding embeddings and concluded that there
is uniform equivalence if and only if the
weights are summable. This result was later slightly strengthened in
\cite{KrPiWa15} by showing that the embedding and its inverse have
the same norm and by delivering an exact formula for it.

In \cite{HeRiWa15}, exact values of the embedding norm were delivered
for general weights but only with $p=1$ and $p=\infty$. The results
were then applied to various
types of weights to see when there is uniform
or polynomial 
equivalence. In particular, it was
shown that for an important class of {\em Product Order-Dependent}
weights there is no polynomial equivalence.

Next in \cite{SH}, the authors applied the complex interpolation
in conjunction with the results mentioned above to get interesting
upper bounds on the norms of the embeddings for general weights and
for all values of $p\in(1,\infty)$.

With the exception of product weights, 
the result of \cite{SH} does not provide general lower bounds for the
norms of the embeddings. This is why, in this short note, we
prove that the norms of the embedding and its inverse are equal and
provide lower bounds for them. These lower bounds are sharp for
$p=1$ and $p=\infty$ and general weights. 
They are also sharp
for {\em finite order weights} and
{\em finite diameter weights} 
for any $p$, and we believe that they
are sharp for any $p$ and general weights.

We next use these lower bounds for the following three cases of weights.

{\bf Finite Order Weights:} It was shown in \cite{HeRiWa15} that
the norms are uniformly bounded for $p=1$ and are proportional
to a polynomial in $s$ for $p=\infty$. Then it was concluded
in \cite{SH} that the embedding norms are bounded from above by a
polynomial in $s$ for every $p\in(1,\infty)$. Using our lower bounds
we show that these norms are indeed polynomial in $s$ for $p>1$.
More precisely they are equal to $\Theta(s^{q/p^*})$ where $p^*$
is the conjugate of $p$ and $q$ is the order of the weights.

{\bf Finite Diameter Weights:} Such kinds of weights have not been
investigated in
this context so far. We use our lower bound to show that
for $p\in(1,\infty)$,
the norms of the embeddings increase at least as fast as 
$s^{1/p^*}$, where $p^*$ is the conjugate of $p$.
We also deliver matching upper bounds to conclude that the norms
are equal to $\Theta(s^{1/p^*})$.

{\bf Product Order-Dependent Weights:} Since there is no
polynomial equivalence for $p=1$ and $p=\infty$, the techniques
employed in \cite{SH} could not answer whether there is polynomial
equivalence for $p\in(1,\infty)$ or not. Using our lower bounds we conclude
that for $p\in(1,\infty)$, the norms of the embeddings increase
faster than any polynomial in $s$.

\section{Basic Facts and Notation}
Following \cite{HeRiWa15}, 
we recall basic facts and notation
pertaining to the {\em Anchored} and {\em ANOVA} spaces considered in
this paper.

We begin with the notation that is used in this paper: by
$s \in \mathbb{N}$ we
denote the dimension, and the set of coordinate indices is
\[
  [s]\,=\,\{1,2,\dots,s\}.
\]
By $\setu,\setv,\setw$ we denote subsets of $[s]$. For
$\setu \subseteq [s]$ its complement is denoted by
$\setu^c=[s]\setminus \setu$. Moreover, for
$\bst,\bsx\in\mathbb{R}^s$, where $\bst=(t_1,\ldots,t_s)$
and $\bsx=(x_1,\ldots,x_s)$, and $\setu \subseteq[s]$, we define
\[
  [\bsx_\setu;\bst_{\setu^c}]\,=\,(y_1,\dots,y_s)\quad\mbox{with}\quad
  y_j=\left\{\begin{array}{ll} x_j &\mbox{if\ }j\in\setu,\\
  t_j &\mbox{if\ }j\notin\setu.
  \end{array}\right.
\]
We also write $\bsx_\setu$ to denote the $|\setu|$-dimensional
vector $(x_j)_{j\in\setu}$ and
\[
   f^{(\setu)}\,=\,\frac{\partial^{|\setu|}f}{\partial \bsx_\setu}
   \,=\,\prod_{j\in\setu}\frac{\partial}{\partial x_j} f\quad
   \mbox{with}\quad f^{(\emptyset)}\,=\,f.
\]

\subsection{Anchored Spaces}
For $p\in[1,\infty]$, let $F_p=W^1_{p,0}$ be the space of functions
defined on $D=[0,1]$ that are absolutely continuous, vanish at zero,
and with the derivative bounded in the $L_p$ norm. It is a Banach
space with respect to the norm $\|f\|_{F_p}=\|f'\|_{L_p}$. The space $F_p$
is the building block for the anchored spaces of $s$-variate functions.

For non-empty $\setu$, let $F_{\setu,p}$ be a Banach space that is
the completion of the space spanned by
$f(\bsx)=\prod_{j\in\setu}f_j(x_j)$ with $f_j\in F_p$ with respect to
the norm
\[
   \|f\|_{F_{\setu,p}}\,=\,\|f^{(\setu)}\|_{L_p}.
\]
For $\setu=\emptyset$, $F_{\emptyset,p}$ is the space of constant
functions with the norm given by the absolute value.

Consider next a family $\bsgamma=(\gamma_\setu)_{\setu\subseteq[s]}$
of non-negative numbers, called {\em weights}. Let
\[
   \setU\,=\,\{\setu \subseteq [s]\ :\ \gamma_\setu\,>\,0\}.
\]
The corresponding $\bsgamma$-weighted {\em anchored space}
$F_{s,p,\bsgamma}$ is the completion of
$\bigoplus_{\setu\in\setU}F_{\setu,p}$
with respect to the norm given by
\[
   \|f\|_{F_{s,p,\bsgamma}}\,=\,\left(\sum_{\setu\in\setU}\gamma_\setu^{-p}
   \,\|f^{(\setu)}([\cdot_\setu;\bszero_{\setu^c}])\|_{L_p}^p
    \right)^{1/p}.
\]

It is known, see, e.g., \cite[Section~3]{HeRiWa15}, that for non-empty $\setu$,
\[
F_{\setu,p}\,=\,T_{\setu,p}(L_p(D^\setu)),
\]
where
\[
  T_{\setu,p}(h)(\bsx)\,=\,\int_{D^\setu}h(\bst_\setu)\,
   \prod_{j\in\setu} \bsone_{[0,x_j)}(t_j) \rd\bst_\setu
\]
and $\bsone_{I}$ is the characteristic function of the set $I$,
i.e., $\bsone_I(t)=1$ if $t \in I$ and 0 otherwise.
Then any $f\in F_{s,p,\bsgamma}$ has a unique decomposition,
called {\em anchored decomposition},
\[
   f\,=\,\sum_{\setu\in\setU}f_{\setu}\quad\mbox{with}\quad
   f_\setu=T_{\setu,p}(h_\setu)\quad\mbox{for}\quad h_\setu\in
   L_p(D^\setu),
\]
where here and in the following we write $D^{\setu}$
for $D^{|\setu|}$. Moreover, $f_\setu\in F_{\setu,p}$ and
\[
   f^{(\setu)}_\setu\,=\,h_\setu\,=\,f^{(\setu)}([\cdot_\setu;\bszero_{\setu^c}]).
\]

\subsection{ANOVA Spaces}
The definition of the corresponding $\bsgamma$-weighted ANOVA spaces is very
similar to that of Anchored spaces with the only difference that
instead of the space
$F_p=W^1_{p,0}$ we use the space $W^1_{p,{\rm int}}$ of absolutely
continuous functions on $[0,1]$ with $\|f'\|_{L_p}<\infty$ and such
that
\[
  \int_0^1f(t)\rd t\,=\,0.
\]
Then the corresponding $\bsgamma$-weighted {\em ANOVA space}
$H_{s,p,\bsgamma}$ has the norm given by
\[
   \|f\|_{H_{s,p,\bsgamma}}\,=\,\left(\sum_{\setu\in\setU}\gamma_\setu^{-p}\,
   \left\|\int_{D^{\setu^c}}f^{(\setu)}([\cdot_\setu;\bst_{\setu^c}])
   \rd\bst_{-\setu}\right\|_{L_p}^p\right)^{1/p}.
\]
Any function from $H_{s,p,\bsgamma}$ has a unique {\em ANOVA decomposition}
\[
  f\,=\,\sum_{\setu\in\setU}f_\setu,
\]
where
\[
  f_\setu(\bsx)\,=\,\int_{D^\setu}g_\setu(\bst_\setu)\,
  \prod_{j\in\setu}(\bsone_{[0,x_j)}(t_j)-(1-t_j))\,\rd\bst_\setu
\]
for some $g_\setu\in L_p(D^\setu)$ and
\[
  f_\setu^{(\setu)}\,=\,g_\setu\,=\,\int_{D^{\setu^c}}f^{(\setu)}([\cdot_\setu;
  \bst_{\setu^c}])\rd\bst_{\setu^c}.
\]

\section{Equivalence of Anchored and ANOVA Spaces}
It was shown in \cite{HeRiWa15} that the Anchored and ANOVA spaces
are equal (as sets of functions) if and only if the following holds:
\begin{equation}\label{ass-1}
  \gamma_\setw\,>0\,\quad\mbox{implies that}\quad\gamma_\setu\,>\,0
  \mbox{\ for all\ }\setu\,\subset\,\setw.
\end{equation}
This is why from now on we assume that \eqref{ass-1} is satisfied.

Let
\[
  \imath\, = \,\imath_{s,p,\bsgamma}:F_{s,p,\bsgamma} \, \hookrightarrow \,
  H_{s,p,\bsgamma}
\]
be the {\it embedding operator}, $\imath(f)=f$.
As mentioned in the introduction, \cite{SH} provides interesting
upper bounds on the norms of $\imath$ and its inverse $\imath^{-1}$.

We now prove that $\imath$ and $\imath^{-1}$ have the
same norm. Moreover we provide a lower bound for that norm which,
as will be illustrated, is sharp for a number of special cases.

\begin{theorem}
The norms $\|\imath\|_{F_{s,p,\bsgamma}\hookrightarrow H_{s,p,\bsgamma}}$
and $\|\imath^{-1}\|_{H_{s,p,\bsgamma}\hookrightarrow F_{s,p,\bsgamma}}$
are the same and are equal to
\begin{equation}\label{equal}
  \sup_{\{c_\setu,h_\setu\}_{\setu\in\setU}}\frac{\left(
   \sum_{\setv\in\setU}\gamma_\setv^{-p}\int_{D^\setv}\abs{
  \sum_{\setw\subseteq [s] \setminus\setv}c_{\setv\cup\setw}\int_{D^\setw}
  h_{\setv\cup\setw}([\bsx_\setv;\bst_\setw])\,\prod_{j\in\setw}(1-t_j)
  \rd\bst_\setw}^p \rd\bsx_\setv\right)^{1/p}}
  {\left(\sum_{\setu\in \setU}c_\setu^p\right)^{1/p}},
\end{equation}
where the supremum is with respect to non-negative numbers $c_\setu$
and functions $h_\setu\in L_{p}(D^\setu)$
such that $\|h_\setu\|_{L_{p}(D^\setu)}=\gamma_\setu$.

The norms are bounded from below by
\begin{equation}\label{lower}
  \sup_{\{c_\setu\ge0\}_{\setu\in\setU}}\frac{\left(\sum_{\setv\in\setU}
   \gamma_\setv^{-p}\left(\sum_{\setw\subseteq[s]\setminus\setv}
   c_{\setv\cup\setw}\,\gamma_{\setv\cup\setw} m_{p^*}^{\abs{\setw}}
   \right)^p\right)^{1/p}}
   {\left(\sum_{\setu\in\setU} c_\setu^p\right)^{1/p}},
\end{equation}
where
\[
m_{p^*}\,=\,\frac1{(p^*+1)^{1/p^*}} \ \ \ \
\mbox{ and }\ \ \frac{1}{p}+\frac{1}{p^*}=1
\]
with $m_{\infty}=1$.  
\end{theorem}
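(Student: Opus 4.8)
The plan is to compute both operator norms exactly by parametrising a function $f$ through its two canonical decompositions, which reduces the whole statement to an elementary one-variable H\"older estimate. First I would fix $f\in F_{s,p,\bsgamma}$ and describe it by its anchored decomposition, that is, by the functions $h_\setu\in L_p(D^\setu)$, $\setu\in\setU$, for which $h_\setu=f^{(\setu)}_\setu=f^{(\setu)}([\cdot_\setu;\bszero_{\setu^c}])$; then $\|f\|_{F_{s,p,\bsgamma}}^p=\sum_{\setu\in\setU}\gamma_\setu^{-p}\|h_\setu\|_{L_p(D^\setu)}^p$ directly. The point is to re-express the ANOVA norm of the \emph{same} $f$ in terms of the $h_\setu$. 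Using $f_\setu=T_{\setu,p}(h_\setu)$, the fact that $f_\setu^{(\setv)}\equiv 0$ unless $\setv\subseteq\setu$ (as $f_\setu$ depends only on $\bsx_\setu$), the fundamental theorem of calculus (differentiation in $x_j$ removes the $t_j$-integration and sets $t_j=x_j$), and $\int_0^1\bsone_{[0,x_j)}(t_j)\rd x_j=1-t_j$, one obtains for every $\setv\in\setU$
\[
  \int_{D^{\setv^c}}f^{(\setv)}([\cdot_\setv;\bst_{\setv^c}])\rd\bst_{\setv^c}
  \,=\,\sum_{\setw\subseteq[s]\setminus\setv}\int_{D^\setw}h_{\setv\cup\setw}([\cdot_\setv;\bst_\setw])\,\prod_{j\in\setw}(1-t_j)\rd\bst_\setw ,
\]
where only the terms with $\setv\cup\setw\in\setU$ survive. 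Substituting this into the definition of $\|\cdot\|_{H_{s,p,\bsgamma}}$, and then normalising by writing $h_\setu=c_\setu\widehat h_\setu$ with $c_\setu=\gamma_\setu^{-1}\|h_\setu\|_{L_p(D^\setu)}\ge 0$ and $\|\widehat h_\setu\|_{L_p(D^\setu)}=\gamma_\setu$ (so that $\|f\|_{F_{s,p,\bsgamma}}^p=\sum_{\setu\in\setU}c_\setu^p$), one checks that every admissible pair $(\{c_\setu\},\{\widehat h_\setu\})$ arises in this way; since $\setU$ is finite there are no convergence subtleties, and the sign of each $c_\setu$ can be absorbed into $\widehat h_\setu$. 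Hence $\|\imath\|=\sup_{f\neq 0}\|f\|_{H_{s,p,\bsgamma}}/\|f\|_{F_{s,p,\bsgamma}}$ is exactly the supremum in \eqref{equal}.

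Next I would treat $\|\imath^{-1}\|$ by the symmetric computation, parametrising $f$ by its ANOVA decomposition, i.e.\ by $g_\setu\in L_p(D^\setu)$ with $g_\setu=f^{(\setu)}_\setu$, so that $\|f\|_{H_{s,p,\bsgamma}}^p=\sum_{\setu\in\setU}\gamma_\setu^{-p}\|g_\setu\|_{L_p(D^\setu)}^p$. Differentiating the ANOVA representation of $f$ in the variables $\bsx_\setv$ and evaluating at $[\cdot_\setv;\bszero_{\setv^c}]$, and using $\bsone_{[0,0)}(t_j)-(1-t_j)=-(1-t_j)$, one gets
\[
  f^{(\setv)}([\cdot_\setv;\bszero_{\setv^c}])\,=\,\sum_{\setw\subseteq[s]\setminus\setv}(-1)^{\abs{\setw}}\int_{D^\setw}g_{\setv\cup\setw}([\cdot_\setv;\bst_\setw])\,\prod_{j\in\setw}(1-t_j)\rd\bst_\setw ,
\]
so $\|f\|_{F_{s,p,\bsgamma}}^p$ has exactly the shape found above except for the signs $(-1)^{\abs{\setw}}$. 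Writing $g_\setu=c_\setu\widehat g_\setu$ as before, using $(-1)^{\abs{\setw}}=(-1)^{\abs{\setv}}(-1)^{\abs{\setv\cup\setw}}$ (valid because $\setv$ and $\setw$ are disjoint), and noting that the global factor $(-1)^{\abs{\setv}}$ is killed when one takes $\abs{\cdot}^p$, the substitution $h_\setu:=(-1)^{\abs{\setu}}\widehat g_\setu$ --- a norm-preserving involution of $\{h:\|h\|_{L_p(D^\setu)}=\gamma_\setu\}$ that leaves the $c_\setu$ unchanged --- turns the expression for $\|\imath^{-1}\|$ into \emph{verbatim} the supremum \eqref{equal}. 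This proves $\|\imath\|=\|\imath^{-1}\|$, both equal to \eqref{equal}.

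For the lower bound \eqref{lower} I would simply restrict the supremum \eqref{equal} to product test functions. Fix $\phi\in L_p(D)$ with $\phi\ge 0$ and $\|\phi\|_{L_p(D)}=1$, and set $h_\setu=\gamma_\setu\prod_{j\in\setu}\phi$, which satisfies $\|h_\setu\|_{L_p(D^\setu)}=\gamma_\setu$. Then the inner integral in \eqref{equal} factorises as $\gamma_{\setv\cup\setw}\bigl(\prod_{j\in\setv}\phi(x_j)\bigr)\beta^{\abs{\setw}}$ with $\beta:=\int_0^1\phi(t)(1-t)\rd t\ge 0$, so, using $\|\phi\|_{L_p(D)}=1$ once more, the $\setv$-summand of the numerator of \eqref{equal} equals $\gamma_\setv^{-p}\bigl(\sum_{\setw\subseteq[s]\setminus\setv}c_{\setv\cup\setw}\gamma_{\setv\cup\setw}\beta^{\abs{\setw}}\bigr)^p$. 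Thus \eqref{equal} is at least the quantity in \eqref{lower} with $m_{p^*}$ replaced by $\beta$, for every such $\phi$; since all summands are non-negative, that quantity is non-decreasing in $\beta\ge 0$, and it only remains to maximise $\beta$. By H\"older's inequality and $\|\phi\|_{L_p(D)}=1$, $\beta\le\bigl(\int_0^1(1-t)^{p^*}\rd t\bigr)^{1/p^*}=(p^*+1)^{-1/p^*}=m_{p^*}$, with equality for $p\in(1,\infty)$ at $\phi(t)=(p^*+1)^{1/p}(1-t)^{p^*-1}$ and at $\phi\equiv 1$ for $p=\infty$; for $p=1$ one uses $\phi_n=n\,\bsone_{[0,1/n]}$, for which $\beta_n=1-\tfrac1{2n}\to 1=m_\infty$, and lets $n\to\infty$. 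This gives \eqref{lower}.

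The only genuinely delicate part of this plan is the bookkeeping in the first two steps: differentiating the anchored and ANOVA kernels correctly, keeping track of which index sets $\setw$ contribute to a given $\setv$, and verifying that the sign flip $h_\setu\mapsto(-1)^{\abs{\setu}}h_\setu$ combined with the normalisation makes the two suprema literally the same object. The one-dimensional optimisation in the third step is routine; only the endpoint cases $p=1$ (where the extremal $\phi$ is merely a limit of $L_1$-functions) and $p=\infty$ (essential suprema, and the $\ell_\infty$-type outer norms in the definitions) need the customary conventions together with a short limiting argument.
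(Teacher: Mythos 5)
Your proposal is correct and follows essentially the same route as the paper: parametrise $f$ by its anchored (resp.\ ANOVA) decomposition with normalised kernels $h_\setu$, compute the other norm to obtain \eqref{equal}, align the two suprema via the sign factor $(-1)^{\abs{\setu}}$, and then restrict to product test functions built from a univariate $\phi$ that is extremal in H\"older's inequality against $1-t$ to get \eqref{lower}. Your limiting argument with $\phi_n=n\,\bsone_{[0,1/n]}$ at $p=1$ is in fact a touch more careful than the paper, which simply posits a univariate $h$ attaining $m_{p^*}$.
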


\begin{proof}
As already mentioned, any $f\in F_{s,p,\bsgamma}$ can be written as
\[
  f(\bsx)\,=\,c_\emptyset\,\gamma_\emptyset +
  \sum_{\emptyset\not=\setu\in\setU} f_{\bspitch,\setu}(\bsx),\quad\mbox{where}\quad
  f_{\bspitch,\setu}(\bsx)\,=\, c_\setu\, \int_{D^{\setu}}  h_\setu(\bst_\setu)\,
  \prod_{j\in\setu} \bsone_{[0,x_j)}(t_j)\rd\bst_\setu,
\]
for non-negative numbers $c_\setu$ and functions $h_\setu$ such that
\[
  \|h_\setu\|_{L_{p}(D^\setu)}\,=\,\gamma_\setu.
\]
Of course, the terms $f_{\bspitch,\setu}$ are from the anchored
decomposition of $f$, and
\[
  f_{\bspitch,\setu}^{(\setu)}\,=\,c_\setu h_\setu.
\]
Therefore,
\[
\|f\|_{F_{s,p,\bsgamma}}\,=\,\left(\sum_{\setu\in\setU}c_\setu^p\right)^{1/p}.
\]
For $\setv\subseteq[s]$, $f_{\bspitch,\setu}^{(\setv)}=0$ if
$\setv\not\subseteq\setu$.
Consider therefore $\setu$ that contains $\setv$.  Then
\[
\int_{D^{\setv^c}} f_\setu^{(\setv)}([\bsx_\setv;\bsx_{\setv^c}])
\rd\bsx_{\setv^c} \,=\,c_\setu\int_{D^{\setu\setminus\setv}}
  h_\setu(\bsx_\setv;\bst_{\setu\setminus\setv})\prod_{j\in\setu\setminus\setv}
  (1-t_j)\rd\bst_{\setu\setminus\setv}
\]
and the ANOVA term $f_{A,\setv}$ is given by
\[
f_{A,\setv}(\bsx)\,=\,\sum_{\setw\subseteq[s]\setminus\setv}
c_{\setv\cup\setw}
  \int_{D^\setw}h_{\setv\cup\setw}([\bsx_\setv;\bst_\setw])\,\prod_{j\in\setw}
  (1-t_j)\rd \bst_\setw.
\]
Therefore
\begin{eqnarray*}
  \|f\|_{H_{s,p,\bsgamma}}&=&\left(  \gamma_\emptyset^{-p}
\left| c_\emptyset\,\gamma_\emptyset+
  \sum_{\emptyset\not=\setu\in\setU}c_\setu \int_{D^\setu}h_\setu(\bst_\setu)
  \,\prod_{j\in\setu}(1-t_j)\rd\bst_\setu\right |^p \right.\\
  & & \left.
  \quad+\sum_{\emptyset\not=\setv\in\setU}\gamma_\setv^{-p}\,\int_{D^\setv}
 \left |\sum_{\setw\subseteq[s]\setminus\setv}c_{\setv\cup\setw}
  \int_{D^\setw}h_{\setv\cup\setw}([\bsx_\setv;\bst_\setw])\,\prod_{j\in\setw}
  (1-t_j)\rd \bst_\setw\right|^p \rd\bsx_\setv\right)^{1/p}
\end{eqnarray*}
and the norm of the embedding $\imath$ is given by the supremum
of the right hand side of the above equation divided by
$\left(\sum_{\setu\subseteq[s]}c_\setu^p\right)^{1/p}$. The supremum
is with respect to non-negative numbers $c_\setu$ and functions
$h_\setu\in L_p (D^\setu)$ such that
$\|h_\setu\|_{L_p(D^\setu)}=\gamma_\setu$.

Consider now $f\in H_{s,p,\bsgamma}$. It can be written as
\[
  f(\bsx)\,=\,c_\emptyset\,\gamma_\emptyset+
   \sum_{\emptyset\not=\setu\in\setU}f_{A,\setu}(\bsx),
\]
where
\[
 f_{A,\setu}(\bsx)\,=\,(-1)^{\abs{\setu}}
  c_\setu\,\int_{D^{\setu}}h_\setu(\bst_\setu)\,\prod_{j\in\setu}
  \left(\bsone_{[0,x_j)}(t_j)-(1-t_j)\right)\rd\bst_\setu.
\]
The terms $f_{A,\setu}$ are from the ANOVA decomposition of $f$.
Again, we choose $\|h_\setu\|_{L_p}=\gamma_\setu$ and, therefore,
\[
\|f\|_{H_{s,p,\bsgamma}}\,=\,\left(\sum_{\setu\in \setU}
c_\setu^p\right)^{1/p}.
\]
For $\setv\subseteq\setu$,
\[
  f_{A,\setu}^{(\setv)}([\bsx_\setv;\bszero_{\setv^c}])\,=\,
  (-1)^{\abs{\setv}}c_\setu \int_{D^{\setu\setminus\setv}}h_\setu
    ([\bsx_\setv;\bst_{\setu\setminus\setv}])\prod_{j\in\setu\setminus\setv}
  (1-t_j)\rd\bst_{\setu\setminus\setv}.
\]
Therefore the anchored norm of $f$ is given by
\begin{eqnarray*}
  \|f\|_{F_{s,p,\bsgamma}}&=&\left(  \gamma_\emptyset^{-p}
\left|c_\emptyset\,\gamma_\emptyset+
  \sum_{\emptyset\not=\setu\in\setU}c_\setu \int_{D^\setu}h_\setu(\bst_\setu)
  \,\prod_{j\in\setu}(1-t_j)\rd\bst_\setu\right|^p \right.\\
  & & \left.
  \quad+\sum_{\emptyset\not=\setv\in\setU}\gamma_\setv^{-p}\,\int_{D^\setv}
 \left|\sum_{\setw\subseteq[s]\setminus\setv}c_{\setv\cup\setw}
  \int_{D^\setw}h_{\setv\cup\setw}([\bsx_\setv;\bst_w])\,\prod_{j\in\setw}
  (1-t_j)\rd \bst_\setw\right|^p  
\rd\bsx_\setv\right)^{1/p}.
\end{eqnarray*}
This proves \eqref{equal}.

We now prove the lower bound \eqref{lower}. Consider
\[
  h_\setu(\bsx)\,=\,\gamma_\setu\,\prod_{j\in\setu}h(x_j),
\]
where the univariate function $h\in L_{p}(D)$ is such that
\[
   \|h\|_{L_{p}(D)}\,=\,1\quad\mbox{and}\quad
   \int_D h(t)\,(1-t)\rd t\,=
   \,\|h\|_{L_{p}(D)}\norm{(1-\cdot)}_{L_{p^*}(D)}\,
   =m_{p^*}.
\]
Then
\begin{eqnarray*}
\lefteqn{\int_{D^\setv}\left(\sum_{\setw\subseteq[s]\setminus\setv}
 c_{\setv\cup\setw}\int_{D^\setw}
 h_{\setv\cup\setw}([\bsx_\setv;\bst_\setw])
  \,\prod_{j\in\setw}(1-t_j)\rd\bst_\setw\right)^p
  \rd \bsx_\setv}\\
 &=&\int_{D^\setv}\prod_{j\in\setv}h^p(x_j)
   \rd\bsx_\setv\,
    \left(\sum_{\setw\subseteq[s]\setminus\setv} c_{\setv\cup\setw}\,
  \gamma_{\setv\cup\setw} m_{p^*}^{\abs{\setw}}\right)^p\\
  &=&
    \left(\sum_{\setw\subseteq[s]\setminus\setv} c_{\setv\cup\setw}\,
  \gamma_{\setv\cup\setw}m_{p^*}^{\abs{\setw}}\right)^p.
\end{eqnarray*}
This completes the proof.
\end{proof}

We have the following proposition.
\begin{proposition}
  The lower bound \eqref{lower} is sharp for $p=1$ and $p=\infty$
  and arbitrary weights. 
\end{proposition}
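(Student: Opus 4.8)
The plan is to reduce the asserted sharpness to a single inequality between the quantities in \eqref{equal} and \eqref{lower} and then to verify that inequality by two elementary estimates. By the Theorem, the common value of $\norm{\imath}$ and $\norm{\imath^{-1}}$ equals the supremum in \eqref{equal} and is bounded below by the supremum in \eqref{lower}; hence it suffices to prove that, when $p=1$ or $p=\infty$, the supremum in \eqref{equal} is also bounded above by the one in \eqref{lower}. Fix an admissible family $\{c_\setu,h_\setu\}_{\setu\in\setU}$ (so $c_\setu\ge 0$ and $\norm{h_\setu}_{L_p(D^\setu)}=\gamma_\setu$), and for $\setv\in\setU$ and $\setw\subseteq[s]\setminus\setv$ with $\setv\cup\setw\in\setU$ set
\[
  G_{\setv,\setw}(\bsx_\setv)\,=\,\int_{D^\setw}h_{\setv\cup\setw}([\bsx_\setv;\bst_\setw])\,\prod_{j\in\setw}(1-t_j)\rd\bst_\setw .
\]
Using the convention $\norm{\,\cdot\,}_{L_p(D^\emptyset)}=\abs{\,\cdot\,}$, the numerator of \eqref{equal} is $\bigl(\sum_{\setv\in\setU}\gamma_\setv^{-p}\,\norm{\sum_{\setw}c_{\setv\cup\setw}G_{\setv,\setw}}_{L_p(D^\setv)}^{\,p}\bigr)^{1/p}$ (the inner sum over $\setw$ as above) and its denominator $\bigl(\sum_{\setu\in\setU}c_\setu^p\bigr)^{1/p}$ is precisely the denominator of \eqref{lower}; so the whole matter reduces to estimating $\norm{\sum_{\setw}c_{\setv\cup\setw}G_{\setv,\setw}}_{L_p(D^\setv)}$ from above.

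The key step is the estimate
\[
  \norm{G_{\setv,\setw}}_{L_p(D^\setv)}\,\le\,m_{p^*}^{\abs{\setw}}\,\gamma_{\setv\cup\setw},
\]
which for $p\in\{1,\infty\}$ follows from Hölder's inequality in the $\bst_\setw$-variables (with the function $h_{\setv\cup\setw}$ taken in $L_p$ and $\prod_{j\in\setw}(1-\cdot)$ in $L_{p^*}$) together with Fubini's theorem in $\bsx_\setv$, using the elementary identity $\norm{\prod_{j\in\setw}(1-\cdot)}_{L_{p^*}(D^\setw)}=m_{p^*}^{\abs{\setw}}$ — this equals $2^{-\abs{\setw}}$ when $p=\infty$ (so $p^*=1$) and $1$ when $p=1$ (so $p^*=\infty$, $m_\infty=1$) — and the normalisation $\norm{h_{\setv\cup\setw}}_{L_p(D^{\setv\cup\setw})}=\gamma_{\setv\cup\setw}$; the degenerate case $\setv=\setw=\emptyset$ just records $\abs{h_\emptyset}=\gamma_\emptyset$, i.e.\ the constant term. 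Granting this estimate, the triangle inequality for the finite sum over $\setw$ gives $\norm{\sum_{\setw}c_{\setv\cup\setw}G_{\setv,\setw}}_{L_p(D^\setv)}\le\sum_{\setw}c_{\setv\cup\setw}\,\gamma_{\setv\cup\setw}\,m_{p^*}^{\abs{\setw}}$; inserting this into the numerator of \eqref{equal} and dividing by the common denominator shows that, for every admissible family, the quotient in \eqref{equal} is bounded by the quotient in \eqref{lower} with the same coefficients $c_\setu$. Taking suprema then shows that the supremum in \eqref{equal} is at most that in \eqref{lower}, and together with the reverse bound from the Theorem this is the claimed equality, i.e.\ sharpness of \eqref{lower}.

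I do not expect a substantial obstacle: the crux is simply recognising the reduction above, after which the proof for $p\in\{1,\infty\}$ uses nothing beyond Hölder's and the triangle inequality, and — importantly — these inequalities are saturated by the rank-one extremizer $h_\setu(\bsx)=\gamma_\setu\prod_{j\in\setu}h(x_j)$ already exhibited in the proof of the Theorem (with $h\equiv 1$ for $p=\infty$, and $h$ an approximate point mass at the anchor $0$ for $p=1$), so no slack is introduced and the lower bound is genuinely attained. The only points requiring care are purely a matter of bookkeeping: keeping the $\setw=\emptyset$ (constant) contribution, which sits inside the $\setv=\emptyset$ summand of \eqref{equal}, matched up with the $\setw=\emptyset$ term of \eqref{lower}, and verifying the constant identification $\norm{\prod_{j\in\setw}(1-\cdot)}_{L_{p^*}(D^\setw)}=m_{p^*}^{\abs{\setw}}$ for $p^*\in\{1,\infty\}$. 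An alternative route, which avoids the supremum in \eqref{equal} altogether, is to compare \eqref{lower} directly with the exact embedding-norm formulas established for $p=1$ and $p=\infty$ in \cite{HeRiWa15}.
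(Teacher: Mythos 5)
Your argument is correct, but it is genuinely different from the paper's proof. The paper does precisely what you sketch only as an ``alternative route'' in your last sentence: it evaluates the supremum in \eqref{lower} in closed form for $p=1$ (getting $\max_{\setu\in\setU}\sum_{\setv\subseteq\setu}\gamma_\setu/\gamma_\setv$) and for $p=\infty$ (getting $\max_{\setv\in\setU}\sum_{\setw\subseteq[s]\setminus\setv}\gamma_{\setv\cup\setw}/(2^{|\setw|}\gamma_\setv)$), and then observes that these values coincide with the exact embedding norms established in \cite{HeRiWa15}; sharpness is thus obtained by matching \eqref{lower} against externally known formulas. Your main route instead compares \eqref{equal} and \eqref{lower} directly: H\"older in the $\bst_\setw$-variables together with Fubini gives $\norm{G_{\setv,\setw}}_{L_p(D^\setv)}\le m_{p^*}^{\abs{\setw}}\gamma_{\setv\cup\setw}$, the triangle inequality in $L_p(D^\setv)$ then bounds the numerator of \eqref{equal} by the numerator of \eqref{lower} with the same coefficients $c_\setu$, and the Theorem squeezes the norm between the two suprema; this is self-contained and never invokes \cite{HeRiWa15}. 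Two remarks: the saturation discussion (rank-one extremizers, approximate point mass at $0$ for $p=1$) is not actually needed for your squeeze, since the lower-bound direction is already supplied by the Theorem; and, more interestingly, nothing in your key estimate uses $p\in\{1,\infty\}$ --- the identity $\norm{\prod_{j\in\setw}(1-\cdot)}_{L_{p^*}(D^\setw)}=m_{p^*}^{\abs{\setw}}$ and both inequalities are valid for every $p\in[1,\infty]$, so, unless there is a subtlety we are both missing, your argument proves the stronger statement that \eqref{lower} equals the embedding norm for all $p$ and all weights, which is exactly what the authors say they only ``believe'' in the introduction. What the paper's route buys is explicit closed-form norm values for $p=1,\infty$; what yours buys is independence from \cite{HeRiWa15} and apparently full generality in $p$.
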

\begin{proof}
To simplify the notation let us use
\[
  \overline{\gamma}_\setu\,=\,\gamma_{\setu}\, m_{p^*}^{|\setu|}\,=\,
  \frac{\gamma_\setu}{(p^*+1)^{|\setu|/p^*}}.
\]
Then the numerator in the lower bound \eqref{lower}
can be rewritten as
\[
 N_p((c_\setu)_\setu)\,=\,\left(\sum_{\setv\in\setU}
\overline{\gamma}_\setv^{-p}\,\left(\sum_{\substack{\setu\subseteq[s]\\
\setv\subseteq\setu}}c_\setu\,
  \overline{\gamma}_{\setu}\right)^p\right)^{1/p}.
\]
For $p=1$ we have $p^*=\infty$. Hence $(p^*+1)^{1/p^*}=1$ and
\[
  \frac{N_p((c_\setu)_\setu)}{\sum_{\setu\in\setU}c_\setu}\,\le\,
  \max_{\setu\subseteq[s]}\sum_{\setv\subseteq\setu}\frac{{\gamma_\setu}}
   {{\gamma_\setv}}
\]
and the inequality above is sharp. From \cite{HeRiWa15} we know that
for $p=1$,
\[
  \|\imath\|_{F_{s,p,\bsgamma}\hookrightarrow H_{s,p,\bsgamma}}\,=\,
  \max_{\setu\in\setU}\sum_{\setv\subseteq\setu}\frac{{\gamma_\setu}}
   {{\gamma_\setv}}.
\]
This proves the claim for $p=1$.

Consider next $p=\infty$. Then, of course, $(p^*+1)^{1/p^*}=2$.
The lower bound \eqref{lower} can be rewritten as
\[
  \max_{(c_\setu\ge0)_{\setu\in\setU}}\frac{\max_{\setv\in\setU}\gamma_\setv^{-1}\,
  \sum_{\setw\subseteq[s]\setminus\setv}c_{\setv\cup\setw}\frac{\gamma_{\setv\cup\setw}}
  {2^{\setw}}}{\max_{\setu\subseteq[s]}c_\setu}\,=\,
  \max_{\setv\in\setU}\sum_{\setw\subseteq[s]\setminus\setv}
  \frac{\gamma_{\setv\cup\setw}}{2^{|\setw|}\,\gamma_{\setv}}.
\]
This lower bound for $p=\infty$ is also sharp since it is equal to the
norm of the corresponding embedding, as shown in \cite{HeRiWa15}.
\end{proof}

As already mentioned, \cite{SH} provides an upper bound on the
norm of the embedding for product weights and any $p\in(1,\infty)$.
It also provides a matching lower bound. The purpose of the
proposition below is to show that a sharp lower bound can also be
obtained from \eqref{lower}.
Recall that {\em product weights}, introduced in \cite{SlWo98}, 
 are of the form
\[
   \gamma_\setu\,=\,\prod_{j\in\setu}\gamma_j\quad
  \mbox{for a sequence $(\gamma_j)_{j \ge 1}$ in $\mathbb{R}^+$.}
\]
In particular, for product weights we have $\setU=\mathcal{P}([s])$,
the set of all subsets of $[s]$.

\begin{proposition}
For $p\in(1,\infty)$,
the lower bound \eqref{lower} is
(modulo a multiplicative constant) sharp for product
weights and
\begin{equation}\label{love-it-too}
  \|\imath\|_{F_{s,p,\bsgamma}\hookrightarrow H_{s,p,\bsgamma}}\,
  \ge\,\prod_{j=1}^s\left(1+\gamma_j\,
  \left(\frac{p-1}{p^*+1}\right)^{1/p^*}\right)^{1/p}.
\end{equation}
In particular it shows that for the uniform equivalence
it is necessary that
\[
  \sum_{j=1}^\infty \gamma_j\,<\,\infty.
\]
\end{proposition}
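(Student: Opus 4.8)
The plan is to extract a lower bound for $\|\imath\|$ from the general bound \eqref{lower} by inserting a cleverly chosen, fully \emph{product} family of coefficients $c_\setu$. For product weights $\setU=\mathcal{P}([s])$, so set $c_\setu=a^{|\setu|}$ for a single constant $a\ge 0$ to be fixed below, and abbreviate $m=m_{p^*}$. With this choice each sum occurring in \eqref{lower} factorizes over the coordinates $j\in[s]$: for fixed $\setv$, since $\setw$ and $\setv$ are disjoint,
\[
  \sum_{\setw\subseteq[s]\setminus\setv}c_{\setv\cup\setw}\,\gamma_{\setv\cup\setw}\,m^{|\setw|}
  \,=\,\prod_{j\in\setv}(a\gamma_j)\,\prod_{j\in[s]\setminus\setv}\bigl(1+a\gamma_j m\bigr),
\]
hence $\gamma_\setv^{-p}\bigl(\,\cdot\,\bigr)^p=\prod_{j\in\setv}a^p\prod_{j\notin\setv}(1+a\gamma_j m)^p$, and summing over all $\setv\subseteq[s]$ collapses the numerator of \eqref{lower} to $\prod_{j=1}^s\bigl(a^p+(1+a\gamma_j m)^p\bigr)$. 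The denominator is $\sum_{\setu\subseteq[s]}a^{p|\setu|}=\prod_{j=1}^s(1+a^p)$. Therefore
\[
  \|\imath\|_{F_{s,p,\bsgamma}\hookrightarrow H_{s,p,\bsgamma}}^p\,\ge\,\prod_{j=1}^s\frac{a^p+(1+a\gamma_j m)^p}{1+a^p}.
\]

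The second step is to select $a$ so that the right-hand side cleans up. Taking $a=(p-1)^{-1/p}$ gives $1+a^p=p/(p-1)=p^*$, so the $j$-th factor equals $\frac{1+(p-1)(1+a\gamma_j m)^p}{p}$. Applying Bernoulli's inequality $(1+x)^p\ge 1+px$ (valid for $p\ge 1$ and $x\ge 0$) with $x=a\gamma_j m\ge 0$ bounds this factor from below by $1+(p-1)\,a\,m\,\gamma_j$. Since $(p-1)a=(p-1)^{1/p^*}$ and $m=m_{p^*}=(p^*+1)^{-1/p^*}$, we obtain $(p-1)\,a\,m=\bigl(\tfrac{p-1}{p^*+1}\bigr)^{1/p^*}$, and taking the product over $j$ yields exactly \eqref{love-it-too}.

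For the final assertion about summability, note that for $p\in(1,\infty)$ the constant $c:=\bigl(\tfrac{p-1}{p^*+1}\bigr)^{1/p^*}$ is strictly positive. Since the $\gamma_j$ are non-negative, the partial products $\prod_{j=1}^s(1+c\gamma_j)$ are nondecreasing in $s$ and remain bounded if and only if $\sum_{j\ge 1}\gamma_j<\infty$. Hence, if $\sum_j\gamma_j=\infty$ then $\|\imath_{s,p,\bsgamma}\|\to\infty$ as $s\to\infty$, so the anchored and ANOVA spaces are not uniformly equivalent; equivalently, uniform equivalence forces $\sum_j\gamma_j<\infty$.

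The only genuinely nonroutine point is the choice of test family: recognizing that a product ansatz $c_\setu=a^{|\setu|}$ makes \eqref{lower} factorize over coordinates, and then pinning down the constant $a=(p-1)^{-1/p}$ — the value that makes the denominator equal $p^*$ and makes the subsequent Bernoulli estimate reproduce the exponent $1/p^*$ rather than something weaker. Everything after that is bookkeeping. One could instead optimize over $a$ exactly (the optimal $j$-th factor works out to $(1+u_j)^{p-1}$ for a suitable $u_j$), but this gives a messier expression and is unnecessary, since \eqref{love-it-too} is already sharp up to a multiplicative constant, which is all that is claimed.
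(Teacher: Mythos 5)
Your proof is correct and follows essentially the same route as the paper: a product ansatz $c_\setu=\prod_{j\in\setu}c_j$ (you take all $c_j$ equal to $a=(p-1)^{-1/p}$, which is exactly the paper's choice $c_j=1/(p-1)^{1/p}$), factorization of numerator and denominator of \eqref{lower} over coordinates, and the Bernoulli estimate $(1+x)^p\ge 1+px$ to land on \eqref{love-it-too}. Like the paper, you leave the ``sharp modulo a multiplicative constant'' part to comparison with the known upper bound for product weights from \cite{SH} rather than proving it, so nothing essential is different.
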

\begin{proof}
Consider $c_\setu=\prod_{j\in \setu}c_j$ with $c_j \ge 0$ for $j \ge 1$.
Then the numerator in the  expression in the lower bound \eqref{lower}
is equal to
\begin{eqnarray*}
\left(\sum_{\setv\subseteq[s]}c_\setv^p\,\left(
  \sum_{\setw\subseteq[s]\setminus\setv}c_\setw\,\gamma_\setw\,m_{p^*}^{|\setw|}
   \right)^p\right)^{1/p}
&=&\,\left(\sum_{\setv\subseteq[s]}c_\setv^p
  \left(\sum_{\setw\subseteq[s]\setminus\setv}\prod_{j\in\setw}(c_j\,\gamma_j
   \, m_{p^*})\right)^p\right)^{1/p}\\
&=&\,\left(\sum_{\setv\subseteq[s]}\prod_{j\in\setv}c_j^p
  \,\prod_{j\in[s]\setminus\setv}(1+c_j \gamma_j m_{p^*})^p\right)^{1/p}\\
&=&\,\left(\prod_{j=1}^s\left(c_j^p+\left(1+c_j\,\gamma_j\,
      m_{p^*}\right)^p\right)\right)^{1/p}\\
&=&\,\prod_{j=1}^s\left(1+c_j^p+\left(1+c_j\,\gamma_j
   \,m_{p^*}\right)^p-1\right)^{1/p}.
\end{eqnarray*}
Since the denominator is equal to $\prod_{j=1}^s(1+c_j^p)$,
we get that the norm of the embedding is bounded from below by
\begin{equation}\label{lbdprodweight}
  \sup_{\{c_j\ge0\}_{j=1}^s} \prod_{j=1}^s\left(1+\frac{
   (1+c_j\,\gamma_j\,m_{p^*})^p-1}{1+c_j^p}\right)^{1/p}.
\end{equation}
Clearly, for $p\in(1,\infty)$ and $c_j=1/(p-1)^{1/p}$
\begin{eqnarray*}
   \left(1+\frac{(1+c_j\,\gamma_j\,m_{p^*})^p-1}{1+c_j^p}\right)^{1/p}
   &\ge&\left(1+\frac{p\,c_j\,\gamma_j\,m_{p^*}}{1+c_j^p}\right)^{1/p}\\
  &=&\left(1+\gamma_j\,\left(\frac{p-1}{p^*+1}\right)^{1/p^*}\right)^{1/p}\\
  &=&1+\gamma_j\,p^{-1}\,\left(\frac{p-1}{p^*+1}\right)^{1/p^*}+
   O\left(\gamma_j^2\right).
\end{eqnarray*}
This completes the proof.
\end{proof}

\begin{remark}
For $p=1$ we have $m_{p^*}=m_{\infty}=1$ and hence we get from
\eqref{lbdprodweight}
\[
\sup_{\{c_j\ge0\}_{j=1}^s} \prod_{j=1}^s\left(1+\frac{
  (1+c_j\,\gamma_j\,m_{p^*})^p-1}{1+c_j^p}\right)^{1/p}\,=\,
\sup_{\{c_j\ge0\}_{j=1}^s} \prod_{j=1}^s\left(1+\frac{
  c_j}{1+c_j}\,\gamma_j\right)\,=\,\prod_{j=1}^s (1+\gamma_j).
\]
This matches exactly the result from \cite[Proposition~17 for $p=1$]{HeRiWa15}.

For $p=2$ we know from \cite{KrPiWa15} that
\begin{equation}\label{true}
  \|\imath\|_{F_{s,2,\bsgamma}\hookrightarrow H_{s,p,\bsgamma}}\,=\,
   \prod_{j=1}^s\left(1+\frac{\gamma_j}{\sqrt{3}}\,
   \left(\sqrt{1+\frac{\gamma_j^2}{12}}+\frac{\gamma_j^3}{\sqrt{12}}\right)
    \right)^{1/2}.
\end{equation}
Note that the lower bound \eqref{love-it-too} for $p=2$ takes the form
\[
  \prod_{j=1}^s\left(1+\frac{\gamma_j}{\sqrt{3}}\right)^{1/2}
\]
and is very close to the true value in \eqref{true}.
\end{remark}

The following proposition provides a lower bound that is
sometimes easier to use.

\begin{proposition}\label{prop:ll}
The lower bound \eqref{lower} is bounded from below by
\begin{equation}\label{lower-lower}
  \max_{\setu\in\setU}\left(\sum_{\setv\subseteq\setu}\frac{\gamma_\setu^p}
{(p^*+1)^{p|\setu\setminus\setv|/p^*}\,\gamma_\setv^p}\right)^{1/p}.
\end{equation}
\end{proposition}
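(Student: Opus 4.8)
The plan is to test the right-hand side of \eqref{lower} against the simplest admissible coefficient vectors, namely those supported on a single index. Fix a set $\setu_0\in\setU$, and choose $c_{\setu_0}=1$ and $c_\setu=0$ for all $\setu\neq\setu_0$. Then the denominator $\bigl(\sum_{\setu\in\setU}c_\setu^p\bigr)^{1/p}$ in \eqref{lower} equals $1$, so the supremum in \eqref{lower} is at least the resulting value of the numerator.

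Next I would evaluate that numerator. For each $\setv\in\setU$, the inner sum $\sum_{\setw\subseteq[s]\setminus\setv}c_{\setv\cup\setw}\,\gamma_{\setv\cup\setw}\,m_{p^*}^{\abs{\setw}}$ contains at most one nonzero term, the one for which $\setv\cup\setw=\setu_0$; this happens precisely when $\setv\subseteq\setu_0$, in which case necessarily $\setw=\setu_0\setminus\setv$. Hence the inner sum equals $\gamma_{\setu_0}\,m_{p^*}^{\abs{\setu_0\setminus\setv}}$ if $\setv\subseteq\setu_0$, and $0$ otherwise. One point that must be checked here is that every $\setv\subseteq\setu_0$ really lies in $\setU$, so that no such term is discarded when the outer sum is restricted to $\setv\in\setU$; this is exactly guaranteed by the standing assumption \eqref{ass-1}, since $\gamma_{\setu_0}>0$.

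It then follows that the numerator equals
\[
  \left(\sum_{\setv\subseteq\setu_0}\gamma_\setv^{-p}\,\gamma_{\setu_0}^p\,m_{p^*}^{p\abs{\setu_0\setminus\setv}}\right)^{1/p}
  \,=\,\left(\sum_{\setv\subseteq\setu_0}\frac{\gamma_{\setu_0}^p}{(p^*+1)^{p\abs{\setu_0\setminus\setv}/p^*}\,\gamma_\setv^p}\right)^{1/p},
\]
where in the last equality I used $m_{p^*}=(p^*+1)^{-1/p^*}$ (and for $p=\infty$ one reads the $\ell_p$-expressions as maxima in the usual way, with $m_\infty=1$, and the same computation applies). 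Since $\setu_0\in\setU$ was arbitrary, taking the maximum over $\setu_0\in\setU$ shows that \eqref{lower} is bounded from below by \eqref{lower-lower}.

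I do not anticipate a genuine obstacle in this argument: it is just a substitution of a cleverly chosen test vector followed by a short bookkeeping of which terms survive. The only subtle step — and the reason the proof uses nothing beyond the definition — is the appeal to \eqref{ass-1} to ensure that all subsets $\setv\subseteq\setu_0$ contribute to the sum over $\setU$.
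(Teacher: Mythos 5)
Your proof is correct and follows essentially the same route as the paper: both test the supremum in \eqref{lower} against a coefficient vector supported on a single set $\setu^*\in\setU$ (with the appeal to \eqref{ass-1} ensuring all $\setv\subseteq\setu^*$ contribute), which yields exactly \eqref{lower-lower}. The only cosmetic difference is that the paper first drops cross terms via $\bigl(\sum_i a_i\bigr)^p\ge\sum_i a_i^p$ and interchanges summation before specializing, whereas your direct substitution makes the inner sum collapse to one term, so no such inequality is needed.
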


\begin{proof}
Clearly the numerator in \eqref{lower} is not smaller than
\[
\left(\sum_{\setv\in\setU} \gamma_\setv^{-p}\sum_{\setw\subseteq[s]\setminus\setv}
c_{\setv\cup\setw}^p\,\frac{\gamma_{\setv\cup\setw}^p}{(p^*+1)^{p|\setw|/p^*}}
\right)^{1/p}\,=\,\left(\sum_{\setu\in\setU}c_\setu^p\,\gamma_\setu^p
\sum_{\setv\subseteq\setu}\gamma_\setv^{-p}\,(p^*+1)^{-p|\setu\setminus\setv|/p^*}
\right)^{1/p}.
\]
Let $\setu^*$ be such that
\[
\max_{\setu\in\setU}\sum_{\setv\subseteq\setu}\frac{\gamma^p_\setu}
{(p^*+1)^{p|\setu\setminus\setv|/p^*}\,\gamma_\setv^p}
\]
is attained at $\setu^*$. Then taking $c_{\setu^*}=1$ and $c_\setu=0$ for
all $\setu\not=\setu^*$ completes the proof.
\end{proof}

We now apply \eqref{lower} to get lower bounds for special classes of
weights.

\subsection{Finite Order Weights}
Consider {\em finite order weights}, for the first time dealt with in 
\cite{DSWW06}, of the form
\begin{equation}\label{def-FOW}
  \gamma_\setu\,=\,\left\{\begin{array}{ll} \omega^{|\setu|} &
    \mbox{if\ }|\setu|\le q,\\
  0 &\mbox{if\ }|\setu|>q.\end{array}\right.
\end{equation}
It was shown in \cite{HeRiWa15} that the norm of the embedding is
uniformly bounded for $p=1$, and grows like a polynomial in $s$
for $p=\infty$. The authors of \cite{SH} proved using the above
results and complex interpolation theory that for $p\in(1,\infty)$
the norm of the embedding is bounded from above by a polynomial in
$s$. It is therefore of interest to see whether the embedding norm
is uniformly bounded or indeed behaves polynomially for $p\in(1,\infty)$.

\begin{proposition}\label{FOW}
For finite order weights \eqref{def-FOW} and $p\in(1,\infty)$ we have
\[
  \|\imath\|_{F_{s,p,\bsgamma}\hookrightarrow  H_{s,p,\bsgamma}}\,=\,
  \Theta\left(s^{q/p^*}\right).
\]
\end{proposition}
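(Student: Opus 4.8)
The plan is to establish the $\Theta$ bound by proving a polynomial upper bound and a matching polynomial lower bound, each of order $s^{q/p^*}$.

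For the \textbf{lower bound}, I would use Proposition~\ref{prop:ll}. With the finite order weights \eqref{def-FOW}, pick $\setu^*$ to be any fixed set of size exactly $q$ (assuming $s\ge q$). Then \eqref{lower-lower} evaluated at $\setu^*$ is
\[
  \left(\sum_{\setv\subseteq\setu^*}\frac{\omega^{qp}}
  {(p^*+1)^{p|\setu^*\setminus\setv|/p^*}\,\omega^{p|\setv|}}\right)^{1/p}
  \,=\,\omega^{q}\left(\sum_{k=0}^{q}\binom{q}{k}
  \frac{(p^*+1)^{-p(q-k)/p^*}}{\omega^{pk}}\right)^{1/p},
\]
which is a positive constant depending only on $q,\omega,p$ and is therefore $\Omega(1)$. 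That alone only gives a constant, not $s^{q/p^*}$, so instead I would go back to the stronger bound \eqref{lower} and choose $c_{\setu}=1$ whenever $|\setu|=q$ and $c_\setu=0$ otherwise. The denominator of \eqref{lower} is then $\left(\binom{s}{q}\right)^{1/p}$. For the numerator, take $\setv=\emptyset$: the inner sum over $\setw\subseteq[s]$ with $|\setw|=q$ contributes $\binom{s}{q}\,\gamma_{\setw}\,m_{p^*}^{q}=\binom{s}{q}\,\omega^{q}\,m_{p^*}^{q}$, so the $\setv=\emptyset$ term alone gives $\gamma_\emptyset^{-p}\binom{s}{q}^p\omega^{qp}m_{p^*}^{qp}$. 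Hence the ratio is at least $\binom{s}{q}^{1-1/p}\omega^{q}m_{p^*}^{q}=\binom{s}{q}^{1/p^*}\omega^{q}m_{p^*}^{q}=\Omega(s^{q/p^*})$, using $\binom{s}{q}=\Theta(s^q)$.

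For the \textbf{upper bound}, I would argue directly from the exact expression \eqref{equal}, or more simply invoke the result of \cite{SH}: it was established there (via complex interpolation between the $p=1$ bound of \cite{HeRiWa15}, which is $\Theta(1)$, and the $p=\infty$ bound, which is $\Theta(s^q)$) that $\|\imath\|=O(s^{q/p^*})$. Indeed interpolation between an $O(1)$ bound at $p=1$ (exponent $0$) and an $O(s^q)$ bound at $p=\infty$ (exponent $q$) yields exponent $q(1-1/p)=q/p^*$ at a general $p\in(1,\infty)$. Combining this with the $\Omega(s^{q/p^*})$ lower bound from the previous paragraph gives $\|\imath\|=\Theta(s^{q/p^*})$.

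The main obstacle is making sure the lower-bound constant does not degenerate: one must check that $\binom{s}{q}^{1/p^*}$ genuinely grows like $s^{q/p^*}$ (true for fixed $q$) and that the chosen test configuration $\{c_\setu\}$ is admissible in \eqref{lower} (it is, being nonnegative and not identically zero). A secondary point is that one should confirm the $p=\infty$ upper bound from \cite{HeRiWa15} is exactly $\Theta(s^q)$ and the $p=1$ bound is $\Theta(1)$ for these weights, so that the interpolation exponent comes out to $q/p^*$ and matches; this is exactly the input \cite{SH} used, so no new work is needed there.
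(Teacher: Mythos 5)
Your proposal is correct and follows essentially the same route as the paper's proof: the lower bound uses \eqref{lower} with $c_\setu=1$ exactly when $|\setu|=q$ and only the $\setv=\emptyset$ term of the numerator, giving $\binom{s}{q}^{1/p^*}\omega^q m_{p^*}^q=\Omega(s^{q/p^*})$, while the upper bound $O(s^{q/p^*})$ is taken from the interpolation result of \cite{SH} combined with the $p=1$ and $p=\infty$ results of \cite{HeRiWa15}. The initial detour through Proposition~\ref{prop:ll} is unnecessary (as you yourself note, it only yields a constant), but the final argument matches the paper's.
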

\begin{proof}
It was shown in \cite{HeRiWa15} that the norm of the embedding
for $p=1$ is uniformly bounded and it is proportional to
$s^q$ for $p=\infty$. Hence the results of \cite{SH} imply the
following upper bound
\[
\|\imath\|_{F_{s,p,\bsgamma}\hookrightarrow  H_{s,p,\bsgamma}}\,=\,
  O\left(s^{q\,(1-1/p)}\right).
\]
Hence it is enough to show a matching lower bound. For that
purpose we will use \eqref{lower} for a special choice of $c_\setu$. Namely,
consider $c_\setu=1$ if $|\setu|=q$ and $c_\setu=0$ otherwise.
Note that then the denominator in \eqref{lower} equals
\[
 \left(\begin{array}{c} s \\ q \end{array}\right)^{1/p}.
\]
Consider next the numerator of \eqref{lower} where instead of the
whole summation with respect to $\setv$ we consider only one term
with $\setv=\emptyset$. Then we have
\[
  \sum_{|\setw|=q} c_\setw\,\gamma_\setw\,
    m_{p^*}^{|\setw|}
  \,=\,m_{p^*}^q \omega^q\,\left(\begin{array}{c} s \\ q \end{array}\right).
\]
Therefore
\[
   \|\imath\|_{F_{s,p,\bsgamma}\hookrightarrow
     H_{s,p,\bsgamma}}\,\ge\,m_{p^*}^q \omega^q\,
   \left(\begin{array}{c} s \\ q \end{array}\right)^{1-1/p}
   \,\ge\, \frac{m_{p^*}^q\omega^q}{(q!)^{1-1/p}}\,(s-q)^{q\,(1-1/p)}.
\]
This completes the proof.
\end{proof}

\subsection{Finite Diameter Weights}
Consider {\em finite diameter weights}, which were first introduced by Creutzig 
(see \cite{C07}, and also \cite{NW08})
of the form
\begin{equation}\label{def-FDW}
  \gamma_\setu\,=\,\left\{\begin{array}{ll} \omega^{|\setu|} &
    \mbox{if\ } \diam(\setu)\le q,\\
  0 &\mbox{if\ } \diam(\setu)>q,\end{array}\right.
\end{equation}
where $\diam(\setu)=\max_{i,j \in \setu}|i-j|$ and
$\diam(\emptyset)=0$ by convention.

\begin{proposition}\label{FDW} 
For finite diameter weights \eqref{def-FDW} and $p\in[1,\infty]$ we have
\begin{equation}\label{love-it}
  \|\imath\|_{F_{s,p,\bsgamma}\hookrightarrow  H_{s,p,\bsgamma}}\,=\,
  \Theta\left((s-q)^{1/p^*}\right).
\end{equation}
\end{proposition}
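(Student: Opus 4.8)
The plan is to prove \eqref{love-it} by matching a lower bound with an upper bound, in the same spirit as the proof of Proposition~\ref{FOW}. For the \emph{lower bound} I would feed one very simple family into \eqref{lower}: set $c_\setu=1$ whenever $\setu$ is a singleton $\{i\}$ with $i\in[s]$ (note $\diam(\{i\})=0\le q$, so $\{i\}\in\setU$ and $\gamma_{\{i\}}=\omega$), and $c_\setu=0$ otherwise. The denominator in \eqref{lower} is then $s^{1/p}$. In the numerator I retain only the term indexed by $\setv=\emptyset$; since $c_{\setv\cup\setw}\ne0$ forces $\setv\cup\setw$ to be a singleton, the inner sum collapses to $\sum_{i=1}^s c_{\{i\}}\,\gamma_{\{i\}}\,m_{p^*}=s\,\omega\,m_{p^*}$, and $\gamma_\emptyset=\omega^0=1$, so the numerator is at least $s\,\omega\,m_{p^*}$. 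Hence
\[
\|\imath\|_{F_{s,p,\bsgamma}\hookrightarrow H_{s,p,\bsgamma}}\,\ge\,\omega\,m_{p^*}\,s^{1-1/p}\,=\,\omega\,m_{p^*}\,s^{1/p^*}\,\ge\,\omega\,m_{p^*}\,(s-q)^{1/p^*},
\]
uniformly for all $p\in[1,\infty]$ (with the $\max$-interpretation and $m_{p^*}=m_1=1/2$ at $p=\infty$). Note that the coarser bound \eqref{lower-lower} from Proposition~\ref{prop:ll} does \emph{not} suffice here: for finite diameter weights it equals $\max_{\setu\in\setU}\bigl(1+\omega^p(p^*+1)^{-p/p^*}\bigr)^{|\setu|/p}$, which stays bounded in $s$ because every $\setu\in\setU$ has $|\setu|\le q+1$; one genuinely has to keep $\Theta(s)$ coordinates $c_\setu$ nonzero.

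For the \emph{upper bound} I would start from the endpoint values recorded in \cite{HeRiWa15}. At $p=1$, $\|\imath\|=\max_{\setu\in\setU}\sum_{\setv\subseteq\setu}\gamma_\setu/\gamma_\setv=\max_{\setu\in\setU}(1+\omega)^{|\setu|}=(1+\omega)^{q+1}$ once $s\ge q+1$, so the norm is uniformly bounded, i.e.\ $\Theta(1)=\Theta((s-q)^{1/p^*})$ for $p^*=\infty$. At $p=\infty$, $\|\imath\|=\max_{\setv\in\setU}\sum_{\setw:\,\setv\cup\setw\in\setU}(\omega/2)^{|\setw|}$; grouping the nonempty $\setw$ in this sum by their smallest element $i$, which forces $\setw\subseteq\{i,i+1,\dots,i+q\}$ because $\diam(\setv\cup\setw)\le q$, bounds the sum by $1+s\,(\omega/2)(1+\omega/2)^q$, whereas the choice $\setv=\emptyset$ already gives at least $s\omega/2$; thus $\|\imath\|=\Theta(s)=\Theta(s-q)$. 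For $p\in(1,\infty)$ I would then invoke the interpolation result of \cite{SH} exactly as in Proposition~\ref{FOW}: Riesz--Thorin interpolation between $p=1$ and $p=\infty$ (so $\theta=1-1/p=1/p^*$) yields $\|\imath\|\le\bigl(\|\imath\|_{p=1}\bigr)^{1/p}\bigl(\|\imath\|_{p=\infty}\bigr)^{1/p^*}=O(1)\cdot O(s^{1/p^*})=O((s-q)^{1/p^*})$. Together with the lower bound this gives \eqref{love-it}.

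The one step needing any care is the linear-in-$s$ bound at the $p=\infty$ endpoint: one must see that, although $\setU$ has on the order of $2^q s$ elements, the diameter constraint localizes every admissible $\setw$ to a window of length $q+1$ around its minimal element, so the geometric-type sum over $\setw$ contributes exactly one factor of $s$ together with the absolute constant $(1+\omega/2)^q$. The $p=1$ endpoint and the lower bound are routine, and the passage to $p\in(1,\infty)$ is a verbatim reuse of the finite-order-weights argument.
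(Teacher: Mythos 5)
Your proof is correct, and it follows the paper's overall strategy---a lower bound from \eqref{lower} with a special choice of the $c_\setu$, and an upper bound from the $p=1$ and $p=\infty$ endpoint values of \cite{HeRiWa15} combined with the interpolation result of \cite{SH}---but the execution differs in two places. For the lower bound the paper sets $c_\setu=1$ on all sets with $\diam(\setu)=q$ and needs the counting identity \eqref{eqell} to evaluate both numerator and denominator, ending with the constant $\omega^q m_{p^*}^2\bigl((1+m_{p^*})/2^{1/p}\bigr)^{q-1}$; your choice of the $s$ singletons gives the same rate $(s-q)^{1/p^*}$ by a one-line computation with no combinatorics, at the price of a smaller constant $\omega\, m_{p^*}$ that does not reflect $q$ or the weight structure. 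At the $p=\infty$ endpoint the paper evaluates the norm by taking the maximum over $\setv$ at $\setv=\emptyset$ and applying \eqref{eqell}, whereas your windowing argument (every admissible nonempty $\setw$ satisfies $\setw\subseteq\{i,\dots,i+q\}$ with $i=\min\setw$, so the sum is at most $1+s(\omega/2)(1+\omega/2)^q$ uniformly in $\setv$) proves the required $O(s)$ bound for all $\setv$ at once, and in fact supplies the justification that the paper's first equality in that computation leaves implicit. Your aside that \eqref{lower-lower} collapses for these weights to $\max_{\setu\in\setU}\bigl(1+\omega^p(p^*+1)^{-p/p^*}\bigr)^{|\setu|/p}=O(1)$, so Proposition \ref{prop:ll} cannot detect the growth in $s$, is also correct. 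The only point to phrase carefully is the passage to $p\in(1,\infty)$: what you call Riesz--Thorin is, as in the paper, the interpolation theorem of \cite{SH} for the anchored/ANOVA scales, which yields $\|\imath\|_{p}\le\|\imath\|_{1}^{1/p}\,\|\imath\|_{\infty}^{1/p^*}$; with that attribution your argument is complete.
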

\begin{proof}
We will use the following fact in a number of places. For 
$\ell\in\{2,\ldots,q\}$,
\begin{equation}\label{eqell}
  \sum_{\substack{\setu \subseteq [s]\\ \diam(\setu)=\ell}}x^{|\setu|}  = 
  \sum_{k=2}^{\ell+1} x^k  \sum_{\substack{\setu \subseteq [s],\
    |\setu|=k \\ \diam(\setu)=\ell}} 1 = \sum_{k=2}^{\ell+1} x^k (s-\ell)
      {\ell-1 \choose k-2}=(s-\ell) x^2 (1+x)^{\ell-1}.
\end{equation}

We start by showing the result for the case $p=1$. In this case, the 
embedding norm is equal to
\[
 \max_{\setu\in\setU}\sum_{\setv\subseteq \setu}\frac{\gamma_\setu}{\gamma_\setv}
 =\max_{\substack{\setu\subseteq [s]\\ \diam (\setu)\le q}}
 \omega^{\abs{\setu}}\sum_{\setv\subseteq \setu}\omega^{-\abs{\setv}}
 =\max_{\substack{\setu\subseteq [s]\\ \diam (\setu)\le q}}
 (1+\omega)^{\abs{\setu}}=(1+\omega)^{q+1},
\]
which yields the desired result for $p=1$. 
For $p=\infty$, the embedding norm is equal to
\begin{eqnarray*}
\max_{\setv\in\setU}\sum_{\substack{\setw\subseteq[s]\setminus\setv
    \\ \diam(\setw\cup\setv)\le q}}\frac{\omega^{|\setw|}}{2^{|\setw|}}
&=&\sum_{\substack{\setw\in\setU\\ \diam (\setw)\le q}}
\left(\frac{\omega}{2}\right)^{|\setw|}\\
&=&\sum_{\ell=0}^q \left(\frac{\omega}{2}\right)^{\ell}  
\sum_{\substack{\setw\in\setU \\ \diam(\setw)=\ell}}1\\
&=&\sum_{\substack{\setw\in\setU \\ \diam(\setw)=0}}1+
\sum_{\substack{\setw\in\setU \\ \diam(\setw)=1}}1+
\sum_{\ell=2}^q \left(\frac{\omega}{2}\right)^{\ell}  
\sum_{\substack{\setw\in\setU \\ \diam(\setw)=\ell}}1\\
&=&s+1 +\frac{\omega}{2}(s-1) + 
\sum_{\ell=2}^q \left(\frac{\omega}{2}\right)^{\ell}
(s-q)2^{q-1}\\
&=&\Theta\left(s-q\right),
\end{eqnarray*}
where we used \eqref{eqell} in the fourth equality. 
Therefore, using the result of \cite{SH} we get that for $p\in(1,\infty)$
the corresponding norm is bounded by
\[
O\left((s-q)^{1-1/p}\right).
\]

Hence to complete the proof, we need to show a matching lower bound.
Similar to the case of finite order weights we
consider $c_\setu=1$ if $\diam(\setu)=q$ and $c_\setu=0$ otherwise.
Then, using \eqref{eqell} again, the denominator in \eqref{lower} equals
\[
 \left(\sum_{\diam(\setu)=q} 1\right)^{1/p}\, =\, (s-q)^{1/p} 2^{(q-1)/p}.
\]
Consider next the numerator of \eqref{lower} where instead of the
whole summation with respect to $\setv$ we consider only one term
with $\setv=\emptyset$. Then we have, once more by \eqref{eqell},
\[
  \sum_{\diam(\setw)=q} c_\setw\,\gamma_\setw\,m_{p^*}^{|\setw|}
  \,=\, \omega^q \sum_{\diam(\setw)=q} m_{p^*}^{|\setw|}\, =\,
 \omega^q (s-q) m_{p^*}^2 (1+m_{p^*})^{q-1}.
\]
Therefore
\[
\|\imath\|_{F_{s,p,\bsgamma}\hookrightarrow H_{s,p,\bsgamma}}\,\ge\,
\frac{\omega^q (s-q) m_{p^*}^2 (1+m_{p^*})^{q-1}}{(s-q)^{1/p}\, 2^{(q-1)/p}}
\,=\,\omega^q m_{p^*}^2  \left(\frac{1+m_{p^*}}{ 2^{1/p}}\right)^{q-1} (s-q)^{1/p^*}.
\]
This completes the proof.
\end{proof}

\subsection{Product Order-Dependent Weights}
Consider {\em product order-dependent weights}, as introduced in \cite{KSS12}, of the form
\[
  \gamma_\setu\,=\,(|\setu|!)^{\beta_1}\,\prod_{j\in\setu}\frac{c}{j^{\beta_2}}
\quad\mbox{for\ }c>0\mbox{\ and\ }0\,<\,\beta_1\,<\,\beta_2.
\]
It was shown in \cite{HeRiWa15} that for $p=1$ or $p=\infty$,
the norm of the embedding
converges to infinity faster than any polynomial in $s$.

\begin{proposition}\label{POD}
For product order-dependent weights and $p\in[1,\infty]$,
\[
  \|\imath\|_{F_{s,p,\bsgamma}\hookrightarrow H_{s,p,\bsgamma}}
  \,=\,\Omega\left(s^\tau\right)
\quad\mbox{for all\ }\tau\,>\,0.
\]
\end{proposition}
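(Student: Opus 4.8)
The plan is to isolate the range $p\in(1,\infty)$---the endpoints $p=1$ and $p=\infty$ being exactly the statement recalled from \cite{HeRiWa15} just above---and then to keep a single, carefully chosen summand in the lower bound \eqref{lower-lower} of Proposition~\ref{prop:ll}. Since $c>0$, every weight $\gamma_\setu$ is positive, so $\setU=\mathcal{P}([s])$, assumption \eqref{ass-1} holds, and Proposition~\ref{prop:ll} is available. I would fix an integer $\ell=\ell(s)$ with $1\le\ell\le s/2$ (to be specified at the very end), take $\setu=[s]$ in \eqref{lower-lower}, and retain only the non-negative summand corresponding to $\setv=\{\ell+1,\dots,s\}$, for which $|\setu\setminus\setv|=\ell$. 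This yields
\[
  \|\imath\|_{F_{s,p,\bsgamma}\hookrightarrow H_{s,p,\bsgamma}}\,\ge\,\frac{\gamma_{[s]}}{(p^*+1)^{\ell/p^*}\,\gamma_{\{\ell+1,\dots,s\}}}.
\]

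Next I would substitute $\gamma_\setu=(|\setu|!)^{\beta_1}\prod_{j\in\setu}\frac{c}{j^{\beta_2}}$ and simplify. A direct computation gives
\[
  \frac{\gamma_{[s]}}{\gamma_{\{\ell+1,\dots,s\}}}\,=\,\frac{(s!)^{\beta_1}}{((s-\ell)!)^{\beta_1}}\cdot\frac{c^{\ell}}{(\ell!)^{\beta_2}}\,\ge\,(s-\ell)^{\ell\beta_1}\,\frac{c^{\ell}}{(\ell!)^{\beta_2}}\,\ge\,(s/2)^{\ell\beta_1}\,\frac{c^{\ell}}{(\ell!)^{\beta_2}},
\]
where the middle inequality uses $s(s-1)\cdots(s-\ell+1)\ge(s-\ell)^{\ell}$ and the last one uses $\ell\le s/2$. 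Abbreviating $b=b(s):=c\,(s/2)^{\beta_1}(p^*+1)^{-1/p^*}$, this leaves us with $\|\imath\|_{F_{s,p,\bsgamma}\hookrightarrow H_{s,p,\bsgamma}}\ge b(s)^{\ell}/(\ell!)^{\beta_2}$.

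It then remains to pick $\ell$ so that $b(s)^{\ell}/(\ell!)^{\beta_2}$ outgrows every power of $s$; the balance point of $\ell\mapsto b^{\ell}/(\ell!)^{\beta_2}$ is near $\ell^{\beta_2}\approx b$, so I would take $\ell=\lceil b(s)^{1/\beta_2}\rceil$. Because $0<\beta_1<\beta_2$ we have $b(s)^{1/\beta_2}=\Theta(s^{\beta_1/\beta_2})$ with exponent $\beta_1/\beta_2\in(0,1)$, hence $\ell=o(s)$ and in particular $\ell\le s/2$ for all large $s$, as was required above. For large $s$ also $b(s)\ge1$, so $\ell\le 2\,b(s)^{1/\beta_2}$ and thus $b/\ell^{\beta_2}\ge 2^{-\beta_2}$; combining this with Stirling's bound $\ell!\le C\sqrt{\ell}\,(\ell/e)^{\ell}$ gives
\[
  \frac{b^{\ell}}{(\ell!)^{\beta_2}}\,\ge\,\frac{1}{C^{\beta_2}\,\ell^{\beta_2/2}}\left(\frac{b\,e^{\beta_2}}{\ell^{\beta_2}}\right)^{\ell}\,\ge\,\frac{(e/2)^{\beta_2\ell}}{C^{\beta_2}\,\ell^{\beta_2/2}},
\]
which is $\exp\!\big(\Omega(s^{\beta_1/\beta_2})\big)$ because $e/2>1$, $\ell=\Theta(s^{\beta_1/\beta_2})$, and the denominator is only polynomially large in $s$. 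Since $\beta_1/\beta_2>0$, this quantity exceeds $s^{\tau}$ for every fixed $\tau>0$ once $s$ is large, which is exactly the asserted $\|\imath\|_{F_{s,p,\bsgamma}\hookrightarrow H_{s,p,\bsgamma}}=\Omega(s^\tau)$.

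There is no deep step here; the care goes into the final paragraph, where both the constraint $\ell\le s/2$ (used to pass from $(s-\ell)^{\ell\beta_1}$ to $(s/2)^{\ell\beta_1}$) and the bound $b\,e^{\beta_2}/\ell^{\beta_2}\ge(e/2)^{\beta_2}>1$ (used so that the $\ell$-th power diverges) rest on the hypothesis $\beta_1<\beta_2$, through $b(s)^{1/\beta_2}\le\ell\le 2\,b(s)^{1/\beta_2}$ and $b(s)^{1/\beta_2}=o(s)$. Conceptually the mechanism is transparent: deleting $\ell$ coordinates from $\setu=[s]$ multiplies the weight ratio by the factorial prefactor, of size $\sim s^{\ell\beta_1}$, whereas deleting the $\ell$ \emph{smallest-indexed} coordinates costs only a factor $c^{\ell}/(\ell!)^{\beta_2}$; the product $\sim(c\,s^{\beta_1})^{\ell}/(\ell!)^{\beta_2}$ is maximized near $\ell\sim s^{\beta_1/\beta_2}$, where it is of order $\exp(\mathrm{const}\cdot s^{\beta_1/\beta_2})$---super-polynomial in $s$.
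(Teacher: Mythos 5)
Your proposal is correct and takes essentially the same route as the paper: both invoke Proposition~\ref{prop:ll} with $\setu=[s]$ and a tail set $\setv=\{\ell+1,\dots,s\}$, playing the factor $\bigl(s!/(s-\ell)!\bigr)^{\beta_1}\ge (s-\ell)^{\ell\beta_1}$ against the cost $c^{\ell}\,(p^*+1)^{-\ell/p^*}/(\ell!)^{\beta_2}$. The only difference is that the paper fixes $\ell=\lceil\tau/\beta_1\rceil$ independently of $s$ and reads off $\Omega(s^{\tau})$ directly, whereas you let $\ell\sim s^{\beta_1/\beta_2}$ grow with $s$, obtaining the stronger bound $\exp\bigl(\Omega(s^{\beta_1/\beta_2})\bigr)$, which of course implies the claim.
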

\begin{proof}
As already mentioned, the result is known for $p\in\{1,\infty\}$ and,
therefore we consider now only $p\in(1,\infty)$.
We use the lower bound from Proposition \ref{prop:ll} for
$\setu=[s]$ with the summation restricted to $\setv=\{k,k+1,\dots,s\}$
for $k=1,2,\dots,s$. Then
\begin{eqnarray*}
  \|\imath\|_{F_{s,p,\bsgamma} \hookrightarrow H_{s,p,\bsgamma}}^p &\ge&
  \sum_{k=1}^s\frac{\gamma_{[s]}^p}{(p^*+1)^{p(k-1)/p^*}\gamma_{\{k,\dots,s\}}^p}\\
  &=&\sum_{k=1}^s\left(\frac{s!}{(s-k+1)!}\right)^{p\,\beta_1}\,
     \left(\frac{c}{(p^*+1)^{1/p^*}}\right)^{p(k-1)}\,
  \frac1{((k-1)!)^{p\ \beta_2}}\\
  &\ge& \sum_{k=1}^s (s-k+2)^{p\,(k-1)\,\beta_1}\,
     \left(\frac{c}{(p^*+1)^{1/p^*}}\right)^{p(k-1)}\,
  \frac1{((k-1)!)^{p\ \beta_2}}.
\end{eqnarray*}
For given $\tau$ consider only one term from the
sum above with $k$ such that
$k-1=\lceil \tau/\beta_1\rceil$. Then for $s\ge k+2$ we have
\[
   \|\imath\|_{F_{s,p,\bsgamma} \hookrightarrow H_{s,p,\bsgamma}}
  \,>\,a_\tau\,\left(s+1-\lceil\tau/\beta_1\rceil\right)^\tau
\]
for $a_\tau=(c/(p^*+1)^{1/p^*})^{\lceil \tau/\beta_1\rceil}
/(\lceil \tau/\beta_1\rceil !)^{\beta_2}.$ This completes the proof.
\end{proof}


\begin{small}
\noindent\textbf{Authors' addresses:}\\

\medskip

\noindent Peter Kritzer\\
Johann Radon Institute for Computational and Applied Mathematics (RICAM)\\
Austrian Academy of Sciences\\
Altenbergerstr.~69, 4040 Linz, Austria\\
E-mail: \texttt{peter.kritzer@oeaw.ac.at}

\medskip

\noindent Friedrich Pillichshammer\\
Institut f\"{u}r Finanzmathematik und Angewandte Zahlentheorie\\
Johannes Kepler Universit\"{a}t Linz\\
Altenbergerstr.~69, 4040 Linz, Austria\\
E-mail: \texttt{friedrich.pillichshammer@jku.at}

\medskip

\noindent G. W. Wasilkowski\\
Computer Science Department, University of Kentucky\\
301 David Marksbury Building\\
329 Rose Street\\
Lexington, KY 40506, USA\\
E-mail: \texttt{greg@cs.uky.edu}
\end{small}
\end{document}